\documentclass[10pt]{amsart}

\usepackage{hyperref}

\newtheorem{theorem}{Theorem}
\newtheorem{lemma}{Lemma}

\newtheorem{prop}{Proposition}

\newtheorem*{hyp}{Hypothesis B}

\theoremstyle{remark}
\newtheorem{remark}{Remark}
\begin{document}

\title{Linear recursive odometers and beta-expansions}

\author[M. R. Iac\`o]{Maria Rita Iac\`o}
\address{Maria Rita Iac\`o \newline
\indent IRIF, CNRS UMR 8243, \newline
\indent Universit\'e Paris Diderot -- Paris 7,\newline
\indent Case 7014, 75205 Paris Cedex 13.}
\email{maria-rita.iaco@liafa.univ-paris-diderot.fr}

\author[W. Steiner]{Wolfgang Steiner}
\address{Wolfgang Steiner \newline
\indent IRIF, CNRS UMR 8243, \newline
\indent Universit\'e Paris Diderot -- Paris 7,\newline
\indent Case 7014, 75205 Paris Cedex 13.}
\email{steiner@liafa.univ-paris-diderot.fr}

\author[R. F. Tichy]{Robert F. Tichy}
\address{Robert F. Tichy \newline
\indent Graz University of Technology, \newline
\indent Institute for Analysis and Number Theory,\newline
\indent  Steyrergasse 30, 8010 Graz, Austria.}
\email{tichy@tugraz.at}

\thanks{The authors are participants in the ANR/FWF project FAN \lq\lq Fractals and Numeration\rq\rq\
(ANR-12-IS01-0002, FWF grant I1136).}

\begin{abstract}
The aim of this paper is to study the connection between different properties related to $\beta$-expansions. In particular, the relation between two conditions, both ensuring pure discrete spectrum of the odometer, is analysed. The first one is the so-called Hypothesis B for the $G$-odometers and the second one is denoted by (QM) and it has been introduced in the framework of tilings associated to Pisot $\beta$-numerations.

\end{abstract}

\dedicatory{Dedicated to the memory of Pierre Liardet}
\maketitle

\section{Introduction}
In the early 1990's Pierre Liardet visited Graz several times and
Pierre, Peter Grabner and the third author started an intensive cooperation on dynamic properties of digital expansions. Their main results from that period where published in~\cite{glt}. In this work, the basic theory
of odometers was developed. In the subsequent years, these aspects of arithmetic dynamics were extended by various authors. Several PhD students of Pierre worked in this field, in particular we want to mention Guy Barat who received his PhD in Marseilles 1995 and his habilitation at Graz University of Technology 2006.

A~special focus lies on arithmetic conditions which guarantee purely discrete spectrum of the odometer.
In~\cite{HIT}, and before in~\cite{glt}, the authors posed the question whether Hypothesis~B, introduced in~\cite{glt}, and the finiteness property~\eqref{F}, introduced in~\cite{frougny}, are equivalent.
Hypothesis~B is a condition on the carries of the digits in the expansion of positive integers in a base system defined by a linear recurrence. 

Let $(G_k)_{k\ge0}$ be an increasing sequence of positive integers, with initial value $G_0=1$.
Then every positive integer can be expanded as
\begin{equation*}
  n=\sum_{k=0}^{\infty}\varepsilon_k(n) G_k\ ,
\end{equation*}
where $\varepsilon_k(n) \in \{0, \ldots, \lceil G_{k+1}/G_k \rceil -1 \}$ and $\lceil x \rceil$ denotes the smallest integer not less than $x \in \mathbb{R}$. This expansion (called $G$-expansion) is uniquely determined and finite, provided that for every $K$,
\begin{equation}\label{eq1}
\sum_{k=0}^{K-1}\varepsilon_k(n) G_k < G_K .
\end{equation}
For short we write $\varepsilon_k$ for the $k$-th digit of the $G$-expansion; $G = (G_k)_{k \geq 0}$ is called numeration system and the digits $\varepsilon_k$ can be computed by the greedy algorithm.

We denote by $\mathcal{K}_G$ the subset of sequences that satisfy (\ref{eq1}) and we call its elements $G$-admissible. 
In order to extend the addition-by-one map $\tau$ defined on $\mathbb{N}$ to $\mathcal{K}_G$ the following subset of $\mathcal{K}_G$ is introduced:
\begin{equation}
 \mathcal{K}_G^0 = \bigg\{ x\in \mathcal{K}_G\ : \exists M_x, \forall j\geq M_x, \quad \sum_{k=0}^{j}\varepsilon_k G_k\ < G_{j+1}-1 \bigg\}\ .
\end{equation}
Put $x(j)=\sum_{k=0}^{j}\varepsilon_k G_k$, and set
\begin{equation}\label{eq2}
 \tau(x)=(\varepsilon_0(x(j)+1)\dots \varepsilon_j(x(j)+1))\varepsilon_{j+1}(x)\varepsilon_{j+2}(x)\dots \ ,
\end{equation}
for every $x\in \mathcal{K}_G^0$ and $j\geq M_x$. This definition does not depend on the choice of $j\geq M_x$. 
We extend the definition of $\tau$ to sequences $x$ in $\mathcal{K}_G\setminus \mathcal{K}_G^0$ by $\tau(x)=0=(0^{\infty})$; in this way, the transformation $\tau$ is defined on $\mathcal{K}_G$ and it is called $G$-odometer. 

As in~\cite{glt}, we consider sequences $(G_k)_{k\ge0}$ associated to real numbers $\beta > 1$, defined by
\begin{equation} \label{e:Gk}
G_k = \sum_{j=1}^k a_j G_{k-j} + 1, 
\end{equation}
where $a_1 a_2 \cdots$ is the quasi-greedy $\beta$-expansion of~$1$, i.e., the smallest sequence (w.r.t.\ the lexicographical order) containing infinitely many non-zero digits and satisfying $1 = \sum_{j=1}^\infty a_j \beta^{-j}$.
By~\cite{glt}, the $G$-odometer $(\mathcal{K}_G, \tau)$ is continuous if and only if $a_1 a_2 \cdots$ is purely periodic.
Note that, when $a_1 a_2 \cdots$ has period length~$d$, we have
\[
G_k = \sum_{j=1}^d a_j G_{k-j} + G_{k-d} \quad \mbox{for all}\ k \ge d.
\] 
For purely periodic $a_1 a_2 \cdots$, it was shown in~\cite{glt} also that $(\mathcal{K}_G, \tau)$ is uniquely ergodic, an explicit formula for the unique invariant measure $\mu$ defined on $\mathcal{K}_G$ is provided.
For more general $G$-expansions, unique ergodicity follows from the work of Barat and Grabner~\cite{Barat-Grabner16}.
The following condition is used to prove that $\mathcal{K}_G$ has purely discrete spectrum.


\begin{hyp}
There exists an integer $b \ge 0$ such that for all $k$ and 
\begin{equation*}
N = \sum_{i=0}^{k-1} \epsilon_i(N) G_i + \sum_{j=k+b}^\infty \epsilon_j(N) G_j,
\end{equation*}
the addition of $G_m$ to $N$, where $m \geq k + b$, does not change the first $k$ digits in the greedy representation, i.e.,
\begin{equation*}
N + G_m = \sum_{i=0}^{k-1} \epsilon_i(N) G_i + \sum_{j=k}^\infty \epsilon_j(N + G_m) G_j.
\end{equation*}
\end{hyp}


The finiteness property (F) is instead defined in the framework of $\beta$-expansions. Let $\beta >1$ be a fixed real number. A~$\beta$-expansion of a real number $x \in [0,1)$ is a representation of the form
\begin{equation*}
x=\sum_{i=1}^\infty \epsilon_i\beta^{-i}\ ,
\end{equation*}
where $\epsilon_i\in\{0,1,\dots, \lceil \beta\rceil -1\}$ and $\lceil x\rceil$ denotes the smallest integer not less than~$x$. 
Beta-expansions were introduced by R\'enyi~\cite{Renyi} and generalize standard representations in an integral base. 
These expansions can be obtained via the iteration of the so-called $\beta$-transformation~$T_\beta$ defined by 
\begin{equation*}
T_\beta\colon [0,1)\rightarrow [0,1)\ , \quad x\mapsto \beta x-\lfloor\beta x \rfloor\ ,
\end{equation*}
where $\lfloor x \rfloor$ is the largest integer not exceeding~$x$. 
Taking, at each iteration of~$T_\beta$, $\epsilon_i=\lfloor \beta T_\beta^{i-1}(x) \rfloor$, we obtain the following greedy expansion of $x$
\begin{equation*}
x=\sum_{k=1}^\infty\epsilon_k\beta^{-k}=0.\epsilon_1\epsilon_2\epsilon_3\dots\ .
\end{equation*}
To obtain the quasi-greedy $\beta$-expansions, one can use the transformation 
\[
\tilde{T}_\beta:\ (0,1]\to (0,1], \quad x \mapsto \beta x - \lceil \beta x \rceil + 1,
\]
which differs from~$T_\beta$ only at the points of discontinuity.
Then the quasi-greedy $\beta$-expansion of~$1$ is given by $a_j = \lceil \beta \tilde{T}_\beta^{j-1}(1) \rceil-1$.
Let
\[
V_\beta = \big\{\tilde{T}_\beta^k(1): k\geq 0\big\}.
\]
If $V_\beta$ is finite, then $\beta$ is called a Parry number. 
As for $G$-adic expansions, not all strings of digits in $\{0,1,\dots, \lceil \beta\rceil -1\}$ are admissible. Parry \cite{Parry} observed that 
a sequence $\epsilon_1 \epsilon_2 \cdots$ is admissible if and only if 
\begin{equation}\label{admissibility}
\epsilon_{j}\epsilon_{j+1}\dots < a_1a_2\dots\qquad {\rm for\ all\ } j\geq 1\ .
\end{equation}
A sequence is the $\beta$-expansion of some $x \in [0,1)$ if and only if it is admissible. 

A~significant question in this setting is for which $\beta$ is the expansion in base $\beta$ of $x$ finite, i.e., it is important to provide a description of the set 
\[
\mathrm{Fin}(\beta) = \{x\in[0,1):\, \exists k\geq 0,\ T_\beta^k(x)=0\}.
\] 
Note that many authors rather consider $x \in [0,\infty)$ in the definition of $\mathrm{Fin}(\beta)$, with the condition that $T_\beta^k(\beta^{-n} x)=0$ for $x \in [0,\beta^n)$.
A~number $\beta$ is said to have the \emph{finiteness property} if 
\begin{equation}\tag{F} \label{F}
\mathrm{Fin}(\beta) = \mathbb{Z}[\beta^{-1}] \cap [0,1)
\end{equation}
holds.
This property was introduced by Frougny and Solomyak~\cite{frougny}, and they proved \cite[Lemma~1]{frougny} that if \eqref{F} holds, then $\beta$ is a Pisot number. 
An algebraic integer $\beta >1$ is called a Pisot number if all its Galois conjugates have modulus less than~1. However, there exist Pisot numbers that do not fullfill \eqref{F}, such as all numbers with non purely periodic quasi-greedy $\beta$-expansion of~$1$. 
In~\cite{frougny}, it is also shown that if $a_1\geq a_2\geq a_3 \geq \cdots$, then 
\begin{equation} \tag{PF} \label{PF}
\mathbb{Z}_+[\beta^{-1}] \cap [0,1) \subseteq \mathrm{Fin}(\beta)\ ,
\end{equation}
where $\mathbb{Z}_+=\mathbb{Z}\cap [0,\infty)$. 
This condition is usually referred to as the \emph{positive finiteness} condition~\eqref{PF}, and it is equivalent to say that $\bigcup_{n\ge0} \beta^n \mathrm{Fin}(\beta)$ is closed under addition. Akiyama \cite[Theorem 1]{akiyama2006} proved that if $\beta > 1$ is a real number satisfying~\eqref{PF}, then $\beta$ satisfies \eqref{F} or $a_1 \ge a_2 \ge \cdots$.
This result will be used in the proof of Lemma~\ref{l:PF+QM-B}.

In the present paper we show that Property~\eqref{F} does not imply Hypothesis~B. 
We show that we also need the so-called \emph{quotient mapping condition}
\begin{equation} \tag{QM} \label{QM}
\mathrm{rank}(\langle V_\beta - V_\beta\rangle_\mathbb{Z}) = \deg(\beta) - 1,
\end{equation}
where $\langle V_\beta - V_\beta\rangle_\mathbb{Z}$ denotes the $\mathbb{Z}$-module spanned by differences of elements of~$V_\beta$. 
This condition was introduced by Siegel and Thuswaldner~\cite{ST} in the framework of tilings associated to Pisot substitutions, and for $\beta$-expansions in the present form in~\cite{MS}.
Moreover, if we allow sequences $a_1 a_2 \cdots$ that are not purely periodic, then Hypothesis~B does not imply~\eqref{F} but only~\eqref{PF}.

\begin{theorem} \label{t:3}
Let $\beta > 1$.
Hypothesis~B holds for the sequence $(G_k)_{k\ge0}$ associated to~$\beta$ if and only if conditions \eqref{PF} and \eqref{QM} hold.
\end{theorem}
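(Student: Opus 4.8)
The plan is to prove both directions by carefully translating between the $G$-adic carry mechanics of Hypothesis~B and the arithmetic of $\beta$-expansions. The key bridge is the well-known correspondence between the $G$-expansion of an integer $N$ and the $\beta$-expansion of a point in $\mathbb{Z}[\beta^{-1}] \cap [0,1)$: reading the digits $\varepsilon_k(N)$ from high to low order gives an admissible $\beta$-string, so statements about how adding $G_m$ propagates carries downward (toward index~$0$) correspond to statements about how adding a power $\beta^{-i}$ propagates carries in a $\beta$-expansion. Under this dictionary, the claim that "adding $G_m$ for $m \ge k+b$ never disturbs the digits below index~$k$" becomes a uniform bound on how far the carry from adding a high power of $\beta^{-1}$ can reach; the constant $b$ in Hypothesis~B plays exactly the role of a bounded carry-propagation length.

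**For the direction** \eqref{PF}\,$\wedge$\,\eqref{QM}\,$\Rightarrow$\,Hypothesis~B, I would first use Akiyama's theorem (quoted after~\eqref{PF}) together with \eqref{PF}: either $\beta$ satisfies~\eqref{F}, or $a_1 \ge a_2 \ge \cdots$, and in both cases $\bigcup_{n\ge0}\beta^n\,\mathrm{Fin}(\beta)$ is closed under addition, so that sums of finite expansions remain finite. The role of~\eqref{QM} is to guarantee that the carries stay within a \emph{bounded} window rather than merely terminating: since $\mathrm{rank}(\langle V_\beta - V_\beta\rangle_{\mathbb{Z}}) = \deg(\beta)-1$, the differences $\tilde{T}_\beta^k(1) - \tilde{T}_\beta^\ell(1)$ live in a lattice of full expected rank, which forces the relevant carry quantities to take values in a finite set and hence to be absorbed after a bounded number~$b$ of steps. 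I would make this quantitative by expressing the effect of adding $G_m$ on the low-order digits as a bounded combination of the $V_\beta$-differences and showing that~\eqref{QM} prevents these from accumulating indefinitely.

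**For the converse,** Hypothesis~B\,$\Rightarrow$\,\eqref{PF}\,$\wedge$\,\eqref{QM}, I would run the dictionary in reverse. Bounded carry propagation (the constant $b$) immediately yields that any finite sum of basis elements $G_k$ stabilizes into a finite $G$-expansion, which transported to the $\beta$-side says precisely that the positive part $\mathbb{Z}_+[\beta^{-1}]\cap[0,1)$ lies in $\mathrm{Fin}(\beta)$, i.e.\ \eqref{PF}. For~\eqref{QM}, I would argue contrapositively: if the rank of $\langle V_\beta - V_\beta\rangle_{\mathbb{Z}}$ were strictly less than $\deg(\beta)-1$, one could produce a sequence of additions whose carry windows grow without bound, contradicting the uniform~$b$; the rank deficiency gives exactly the room for carries to propagate arbitrarily far.

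**The main obstacle** I expect is the $\Rightarrow$\,\eqref{QM} implication and, dually, using~\eqref{QM} to \emph{bound} the propagation length in the forward direction. The finiteness statements \eqref{PF} and~\eqref{F} only assert that carries \emph{terminate}, whereas Hypothesis~B demands a \emph{uniform} termination length~$b$ independent of $k$ and $N$, and it is precisely the full-rank condition~\eqref{QM} that upgrades termination to uniform boundedness. Pinning down why rank $= \deg(\beta)-1$ (rather than some smaller value) is the exact threshold — and controlling the geometry of the module $\langle V_\beta - V_\beta\rangle_{\mathbb{Z}}$ finely enough to extract an explicit bound on~$b$ — will be the technical heart of the argument, and is where I would invest the most care.
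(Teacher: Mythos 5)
Your plan has the right overall shape (three implications, matching the paper's Lemmas~\ref{l:BPF}, \ref{l:BQM} and~\ref{l:PF+QM-B}), but two of its load-bearing steps rest on a misreading of~\eqref{QM}, and the parts you yourself flag as ``the technical heart'' are exactly the ones left unproved. First, the rank condition: since $\langle V_\beta\rangle_\mathbb{Z} = \mathbb{Z}[\beta]$ has rank $\deg(\beta)$ and every $v \in V_\beta$ satisfies $v = 1 + (v-1)$, the module $\langle V_\beta - V_\beta\rangle_\mathbb{Z}$ \emph{always} has rank at least $\deg(\beta)-1$; the negation of~\eqref{QM} is therefore rank \emph{equal to} $\deg(\beta)$, i.e.\ full rank. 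Your contrapositive for B\,$\Rightarrow$\,\eqref{QM} (``if the rank were strictly less than $\deg(\beta)-1$\,\dots'') treats a case that never occurs, and your description of~\eqref{QM} as a ``full-rank condition'' is the opposite of what it is: it is a rank-one deficiency (the differences lie in a hyperplane, equivalently there is a functional $c$ with $\delta_\infty(c)\cdot\delta_\infty(v)=1$ on $V_\beta$, as in Proposition~\ref{p:subspace}). The mechanism that actually connects~\eqref{QM} to Hypothesis~B is Lemma~\ref{l:QMminimal}: \eqref{QM} holds if and only if $(G_k)$ satisfies a recurrence whose characteristic polynomial is the \emph{minimal} polynomial of~$\beta$. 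When~\eqref{QM} fails, $G$ obeys only a longer recurrence, and the integer defect $f_k = \sum_{i=0}^d p_i G_{k+i} = \mathcal{O}(\alpha^k)$ is nonzero for infinitely many~$k$ (compare $G_{k+3} = G_{k+1} + G_k \pm 1$ for $\beta^3 = \beta+1$, where even~\eqref{F} holds but Hypothesis~B fails); feeding the identity $\sum_{p_i>0} p_i G_{k+i} = \sum_{p_i<0} |p_i| G_{k+i} + f_k$ into Hypothesis~B --- the left side must have $G$-expansion ending in many zeros, while the right side has low-order digits equal to~$f_k$ --- yields the contradiction. No statement about ``carry windows growing without bound'' is needed or, in fact, available.

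The same inversion undermines your forward direction: the uniform bound~$b$ does \emph{not} come from~\eqref{QM}. It comes from $\beta$ being Pisot, via Frougny--Solomyak's Proposition~2, which supplies a uniform~$L$ such that adding $\beta^{-n}$ to a finite $\beta$-expansion yields again a finite expansion with controlled extension; the role of~\eqref{QM} (again via Lemma~\ref{l:QMminimal}) is only to guarantee that adding $G_m$ to an integer is governed by exactly the same carry rules as base-$\beta$ addition, so that $L$ (plus the longest run $L'$ of zeros in $a_1a_2\cdots$) transfers to $b = L + L'$. Your claim that~\eqref{QM} ``forces the relevant carry quantities to take values in a finite set'' would not survive an attempt to make it precise. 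Finally, in B\,$\Rightarrow$\,\eqref{PF} the word ``immediately'' hides the real content: the $G$-digits of an integer and the $\beta$-digits of a point in $[0,1)$ are only asymptotically related, so one must transport an \emph{infinite} $\beta$-expansion back to $G$-expansions quantitatively. The paper does this by taking a minimal counterexample $y$ with $x = y - \beta^{-n} \in \mathrm{Fin}(\beta)$ and using $G_j = c\beta^j + \mathcal{O}(\alpha^j)$ to show that the greedy digits of $N_k + G_{k-n}$ reproduce arbitrarily many digits of the infinite expansion of~$y$ for large~$k$, contradicting Hypothesis~B. Without some such quantitative step, your dictionary between $G$-carries and $\beta$-carries remains heuristic, and the proposal as written does not close either direction.
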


Since $\langle V_\beta\rangle_\mathbb{Z} = \mathbb{Z}[\beta]$, condition~\eqref{QM} holds when $\# V_\beta = \deg(\beta)$, i.e., when $G$ satisfies a linear recurrence with the minimal polynomial of~$\beta$ as characteristic polynomial. 
A~class of non-trivial examples of bases satisfying~\eqref{QM} was given in \cite{ST, MS} by $\beta^3 = t \beta^2 - \beta + 1$, $t \ge 2$; in this case, we have $\# V_\beta = \deg(\beta) + 1$.
The following theorem gives a characterization of~\eqref{QM} for $\beta > 1$ satisfying $\# V_\beta = \deg(\beta) + 1$.

\begin{theorem} \label{t:2}
Let $\beta > 1$ be such that $\# V_\beta = \deg(\beta) + 1$, with $\tilde{T}_\beta^{\deg(\beta)+1}(1) = \tilde{T}_\beta^k(1)$, $0 \le k \le \deg(\beta)$, i.e., $a_1 a_2 \cdots = a_1 \cdots a_k\, (a_{k+1} \cdots a_{\deg(\beta)+1})^\infty$. 
Then $\beta$ satisfies \eqref{QM} if and only if $\deg(\beta) - k$ is even. 
\end{theorem}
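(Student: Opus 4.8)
The plan is to reduce (QM) to a single scalar equality and then to evaluate that scalar through the ``extra'' root forced by the hypothesis $\#V_\beta = \deg(\beta)+1$. Write $d = \deg(\beta)$ and $v_j = \tilde T_\beta^j(1)$, so that $v_j = V_j(\beta)$ where $V_0 = 1$ and $V_{j+1}(x) = x\,V_j(x) - a_{j+1}$; thus each $V_j$ is monic of degree $j$, and $v_0,\dots,v_{d-1}$ is a $\mathbb{Q}$-basis of $\mathbb{Q}(\beta)$. Since $\langle V_\beta\rangle_\mathbb{Z} = \mathbb{Z}[\beta] = \mathbb{Z}\cdot 1 + \langle V_\beta - V_\beta\rangle_\mathbb{Z}$, the module $\langle V_\beta - V_\beta\rangle_\mathbb{Z}$ has rank $d-1$ or $d$, and (QM) is precisely the case of rank $d-1$. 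Let $S$ be the unique $\mathbb{Q}$-linear functional with $S(v_j)=1$ for $0\le j\le d-1$ (the coordinate sum in this basis). As $v_1-v_0,\dots,v_{d-1}-v_0$ span $\ker S$, I would first show that (QM) is equivalent to $S(v_d)=1$, i.e.\ to $V_\beta$ lying in the affine hyperplane $\{S=1\}$.

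The second step evaluates $S(v_d)$ via the eventual periodicity. With $p = d+1-k$ and $A(x)=\sum_{i=1}^{p} a_{k+i}x^{p-i}$, consider
\[ Q(x) = V_{d+1}(x) - V_k(x) = (x^p-1)V_k(x) - A(x). \]
Since $Q(\beta)=v_{d+1}-v_k=0$ and $\deg Q = d+1$, Gauss's lemma gives $Q(x)=(x-\rho)\,m_\beta(x)$ for a rational (indeed integral) \emph{extra root} $\rho$. The key observation is that for \emph{every} root $r$ of $Q$, not only for $r=\beta$, the sequence $\big(V_j(r)\big)_{j\ge k}$ is purely periodic with period $p$: this follows from $V_{d+1}(r)=V_k(r)$ together with the recurrence $V_{j+1}(r)=r\,V_j(r)-a_{j+1}$ and the periodicity of the digits. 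Writing $m_\beta = V_d - \sum_{j<d} c_j V_j$ (so that $S(v_d)=\sum_{j<d} c_j$) and introducing the functional $\hat\Psi(P)=\sum_{Q(r)=0}\nu_r\,P(r)$ determined by $\hat\Psi(V_j)=1$ for $0\le j\le d$, I would show $S(v_d)-1 = -\hat\Psi(m_\beta) = -\nu_\rho\,m_\beta(\rho)$; as $m_\beta(\rho)\ne 0$, this turns (QM) into the condition $\nu_\rho = 0$ on the weight of the extra root.

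To extract $\nu_\rho$ I would pass to generating functions. From $\sum_{j\ge0} V_j(x)\,y^j = \frac{1-\hat a(y)}{1-xy}$ with $\hat a(y)=\sum_{j\ge1}a_j y^j$, together with the identity $1-\hat a(y) = \tilde Q(y)/(1-y^p)$, where $\tilde Q(y)=y^{d+1}Q(1/y)=\prod_{Q(r)=0}(1-ry)$ (obtained directly from the periodicity of the digits), applying $\hat\Psi$ in the variable $x$ and using that $\hat\Psi(V_j)=1$ extends to all $j\ge0$ yields
\[ \sum_{Q(r)=0}\frac{\nu_r}{1-ry} \;=\; \frac{1+y+\cdots+y^{p-1}}{\tilde Q(y)}. \]
Reading off the residue at the simple pole $y=1/\rho$ then shows that $\nu_\rho=0$ if and only if $1+\rho^{-1}+\cdots+\rho^{-(p-1)}=0$, i.e.\ if and only if $\rho$ is a $p$-th root of unity different from $1$. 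Since $\rho\in\mathbb{Q}$, this forces $\rho=-1$ together with a parity restriction on $p=\deg(\beta)+1-k$, which is exactly the parity condition on $\deg(\beta)-k$ in the statement.

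I expect the main obstacle to be the control of the extra root $\rho$ in this final step: one must argue that the vanishing of $\nu_\rho$ is governed purely by the parity of the period, i.e.\ that the only way a rational $\rho$ can be a non-trivial $p$-th root of unity is $\rho=-1$ with $p$ of the appropriate parity, and that this case genuinely occurs under the hypotheses (as it does for the family $\beta^3 = t\beta^2-\beta+1$, where $Q(x)=(x+1)m_\beta(x)$). Establishing the generating-function identity $1-\hat a(y)=\tilde Q(y)/(1-y^p)$ cleanly from the eventual periodicity, and treating the degenerate cases $p=1$ (that is, $k=d$) and small $\deg(\beta)$ separately, are the remaining technical points.
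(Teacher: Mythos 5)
Your reduction is correct, and it takes a genuinely different route from the paper's. The paper argues through the incidence matrix $M_{\sigma_\beta}$ of the $\beta$-substitution: its characteristic polynomial is the Parry polynomial \eqref{e:betapolynomial}, whose roots are the conjugates of $\beta$ plus one extra rational integer root $\rho$; by Proposition~\ref{p:subspace}, (QM) is equivalent to $\mathbf{1}=(1,\ldots,1)$ being orthogonal to the right eigenvector of the extra eigenvalue, and for $\rho=-1$ the paper writes down this eigenvector $\mathbf{w}$ explicitly, so that $\mathbf{1}\cdot\mathbf{w}=0$ becomes a two-line sign count. Your dual formulation --- (QM) $\Leftrightarrow S(v_d)=1$, the interpolation weights $\nu_r$ with $S(v_d)-1=-\nu_\rho m_\beta(\rho)$, and the partial-fraction identity $\sum_r \nu_r/(1-ry)=(1+y+\cdots+y^{p-1})/\tilde Q(y)$ --- checks out at every step: $\hat\Psi(V_j)=1$ for all $j\ge0$ does follow from the $p$-periodicity of $(V_j(r))_{j\ge k}$ at \emph{every} root of $Q$, and $1-\hat a(y)=\tilde Q(y)/(1-y^p)$ is a direct telescoping. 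Both arguments funnel into the same scalar criterion: (QM) holds iff $\rho^k(1+\rho+\cdots+\rho^{p-1})=0$, i.e.\ iff $\rho=-1$ and $p=\deg(\beta)+1-k$ is even. (Indeed, redoing the paper's eigenvector computation for a general eigenvalue $\lambda$ gives $\mathbf{1}\cdot\mathbf{w}_\lambda=\lambda^k(\lambda^p-1)/(\lambda-1)$, which is exactly your residue condition; your version is if anything slightly more general, while the paper's matrix formulation has Proposition~\ref{p:subspace} already available.)

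Two caveats. First, the obstacle you flag is the one genuine gap, and it is needed for the ``if'' direction: your criterion is ``(QM) $\Leftrightarrow$ ($\rho=-1$ and $2\mid p$)'', so a parity-only statement requires $\rho=-1$ \emph{unconditionally}. The paper simply asserts that the eigenvalues are the conjugates of $\beta$ and $-1$, justifying only $\rho\ne1$ (value of \eqref{e:betapolynomial} at $1$); note also $\rho\ne0$, since a vanishing constant term would contradict minimality of the preperiod $k$. To rule out $|\rho|\ge2$ you can argue cheaply: $(1-\hat a(y))/(1-\beta y)=\sum_{j\ge0}\tilde T_\beta^j(1)\,y^j=f(y)$ has all coefficients in $(0,1]$ with constant term $1$, so $f$ has no zero of modulus $\le 1/2$ (for $|y|<1/2$ by the trivial estimate; at $y=-1/2$ one gets $f(-1/2)\ge 1/3+\tilde T_\beta^2(1)/4>0$, and $f(1/2)>0$), whence any rational root of $Q$ with $\hat a(1/\rho)=1$ satisfies $|\rho|<2$; alternatively invoke the classical bound of Solomyak and of Flatto--Lagarias--Poonen that all non-$\beta$ roots of Parry polynomials have modulus less than $(1+\sqrt5)/2$. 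Second, bookkeeping: your computation yields (QM) iff $p$ is even, i.e.\ iff $\deg(\beta)-k$ is \emph{odd}. This agrees with the paper's own proof (which concludes ``$n-k$ even'' with $n=\deg(\beta)+1$), with the remark following the theorem ($k=0$: (QM) iff $\deg(\beta)$ odd), and with the family $\beta^3=t\beta^2-\beta+1$ ($d=3$, $k=0$, (QM) holds) --- but it is the opposite of the literal statement ``$\deg(\beta)-k$ is even'', which is evidently a misprint. So do not gloss your conclusion as ``exactly the parity condition in the statement'': say explicitly that the condition is $2\mid p$, equivalently $\deg(\beta)-k$ odd, and flag the typo.
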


In particular, when $\beta$ is a simple Parry number with $\# V_\beta = \deg(\beta) + 1$, we have $k = 0$ and thus \eqref{QM} holds if and only if $\deg(\beta)$ is odd, e.g., for $\beta^3 = 3\beta^2 - 2\beta + 2$ or $\beta^3 = 3\beta^2 - \beta + 1$. 
Of course, it would be interesting to know what happens if we drop the condition $\# V_\beta = n = \deg(\beta) + 1$ and if there still exist numbers $\beta$ for which (QM) holds.

We conclude with a theorem showing that the odometer has purely discrete spectrum when $\beta$ is a Pisot number satisfying~\eqref{QM}, even when \eqref{PF} does not hold.
Its proof is based on recent results by Barge~\cite{Barge}.

\begin{theorem} \label{t:1}
Let $\beta$ be a Pisot number satisfying~\eqref{QM}. 
Then the odometer $(\mathcal{K}_G,\tau_G)$ associated to~$\beta$ has purely discrete spectrum (with respect to the unique invariant measure). 
\end{theorem}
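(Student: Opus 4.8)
The plan is to realise the odometer geometrically as a rotation on a torus and then to invoke Barge's resolution of the Pisot conjecture for $\beta$-substitutions to obtain pure discreteness. Throughout, write $n = \deg(\beta)$ and recall that, since $\beta$ is a Pisot number, it is in particular a Parry number: $V_\beta$ is finite, $a_1 a_2 \cdots$ is eventually periodic, and the associated $\beta$-substitution is well defined and primitive. We stress that we do \emph{not} assume Hypothesis~B (equivalently \eqref{PF} together with \eqref{QM} by Theorem~\ref{t:3}); only \eqref{QM} and the Pisot property are available, so the classical route to pure discreteness via Hypothesis~B is closed, and we argue instead through the tiling/Rauzy-fractal model of \cite{MS, ST}.

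First I would set up the geometric representation of the odometer. Let $\pi\colon \mathbb{Q}(\beta) \to \mathbb{R}^{n-1}$ be the canonical embedding into the contracting space spanned by the $n-1$ Galois conjugates of~$\beta$, all of modulus~$<1$. Since $\pi(G_k)$ decays geometrically in~$k$ and the digits are bounded, the map
\[
\varphi\colon \mathcal{K}_G \to \mathbb{R}^{n-1},\qquad (\varepsilon_k)_{k\ge0} \mapsto \sum_{k\ge0} \varepsilon_k\, \pi(G_k),
\]
is well defined and continuous, and its image is the central tile (Rauzy fractal) $\mathcal{R} = \varphi(\mathcal{K}_G)$. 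Because the carry rewriting during $G$-adic addition preserves the value $\sum_k \varepsilon_k G_k$ as an element of $\mathbb{Z}[\beta]$, and $\pi$ is a ring homomorphism, the map~$\varphi$ carries~$\tau$ to the translation $x \mapsto x + \pi(1)$ on~$\mathcal{R}$ for $\mu$-almost every point (namely on $\mathcal{K}_G^0$). Following \cite{MS}, $\varphi$ is $\mu$-almost everywhere injective, so $(\mathcal{K}_G, \tau, \mu)$ is measurably isomorphic to this domain exchange on~$\mathcal{R}$, equipped with $\varphi_*\mu$.

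Next I would pass to the torus. Let $\Lambda = \pi\big(\langle V_\beta - V_\beta\rangle_\mathbb{Z}\big) \subset \mathbb{R}^{n-1}$. By hypothesis~\eqref{QM} this module has rank $n-1$, so $\Lambda$ is a full-rank lattice and $\mathbb{T}^{n-1} = \mathbb{R}^{n-1}/\Lambda$ is a genuine $(n-1)$-dimensional torus with normalised Haar measure. Reducing modulo~$\Lambda$ turns the domain exchange on~$\mathcal{R}$ into the rotation $R\colon x \mapsto x + \pi(1)$ on~$\mathbb{T}^{n-1}$; by unique ergodicity of the odometer (\cite{glt, Barat-Grabner16}) this rotation is ergodic, hence minimal, and has purely discrete spectrum. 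It therefore remains only to show that the quotient map $p\colon \mathcal{R} \to \mathbb{T}^{n-1}$ is one-to-one $\varphi_*\mu$-almost everywhere, equivalently that $\mathcal{R}$ tiles $\mathbb{R}^{n-1}$ by~$\Lambda$ up to overlaps of measure zero.

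This last point is exactly the tiling property, and it is where Barge's work enters: by~\cite{Barge}, the tiling dynamical system associated with the $\beta$-substitution of a Pisot number~$\beta$ has purely discrete spectrum, which is equivalent to the central tile~$\mathcal{R}$ tiling its contracting space periodically, i.e.\ to the $\varphi_*\mu$-almost everywhere bijectivity of~$p$. Combined with the full-rank conclusion from~\eqref{QM}, the composite $p \circ \varphi$ is then a measurable isomorphism from $(\mathcal{K}_G, \tau, \mu)$ onto the torus rotation $(\mathbb{T}^{n-1}, R, \mathrm{Leb})$, so the odometer has purely discrete spectrum. The main obstacle is the identification in the second paragraph: making the correspondence between $G$-adic addition and translation on~$\mathcal{R}$ rigorous as a \emph{measurable} isomorphism when $a_1 a_2 \cdots$ is only eventually periodic (so that $\tau$ is discontinuous and the boundary of~$\mathcal{R}$, which must have $\varphi_*\mu$-measure zero, has to be controlled), together with checking that the natural lattice of Barge's tiling coincides with $\Lambda = \pi\big(\langle V_\beta - V_\beta\rangle_\mathbb{Z}\big)$, so that \eqref{QM} and~\cite{Barge} can be applied in tandem.
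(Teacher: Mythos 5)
Your overall strategy is the paper's strategy (realize the odometer on the Rauzy fractal, use \eqref{QM} for a full-rank quotient, use Barge for the tiling), but there are two genuine gaps, one of which breaks the proof outside a special case. First, your map $\varphi$ is ill-defined as written: each $G_k$ is a rational integer, so all of its Galois conjugates equal $G_k$ itself, $\pi(G_k)$ does not decay, and the series $\sum_k \varepsilon_k \pi(G_k)$ diverges. The correct construction (as in the paper) replaces $G_j$ by $\beta^j$, i.e.\ $\varphi_\beta\big((\epsilon_j)_{j\ge0}\big) = \sum_j \delta_\beta'(\epsilon_j \beta^j)$, using the identification $\mathcal{K}_G = \mathcal{K}_\beta$. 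With that correction your ``value-preservation'' argument also fails: $\tau_G$ changes the value $\sum_j \epsilon_j \beta^j$ by $\beta^k - a_1\beta^{k-1} - \cdots - a_k = \tilde{T}_\beta^k(1) \in V_\beta$, which varies with the point, so on $\mathcal{R}$ you only get a domain exchange; the displacement is constant equal to $\delta_\beta'(1)$ only \emph{modulo} $L_\beta = \langle V_\beta - V_\beta\rangle_\mathbb{Z}$, which is exactly why the quotient by that module, and hence \eqref{QM}, enters.

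The more serious gap is that you work only in the archimedean space $\mathbb{R}^{n-1}$ and quotient by the lattice $\Lambda$, obtaining a torus. This can only work when $\beta$ is a unit. For a non-unit Pisot number the paper's representation space is $\mathbb{K}_\beta' = \mathbb{R}^{r-1} \times \mathbb{C}^s \times \varprojlim \mathbb{Z}[\beta]/\beta^n\mathbb{Z}[\beta]$, whose profinite factor is nontrivial; omitting it, $\varphi$ is not almost-everywhere injective and the limiting compact group $\mathbb{K}_\beta'/\delta_\beta'(L_\beta)$ is a solenoid-type group, not a torus. The degree-one case makes this stark: for $\beta = 10$, condition \eqref{QM} holds vacuously (rank $0$), your torus is a single point, yet the $10$-adic odometer has infinite discrete spectrum -- all of its eigenvalues come from the finite (inverse-limit) component you discarded. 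Relatedly, Barge's theorem does not directly give the lattice tiling of $\mathcal{R}$ you need: what \cite{Barge} supplies is the weak finiteness property \eqref{W} for every Pisot number, and the bridge from \eqref{W} together with \eqref{QM} to ``$\mathcal{R}_\beta$ is a fundamental domain of the quotient by $\delta_\beta'(L_\beta)$'' is \cite[Theorem~4]{MS}, stated in the extended space $\mathbb{K}_\beta'$, not in $\mathbb{R}^{n-1}$. So your outline is salvageable as written only for unit Pisot~$\beta$; in general you must carry the $\beta$-adic component through every step, after which it coincides with the paper's proof.
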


We do not know whether \eqref{QM} is a necessary condition for purely discrete spectrum.

\section{Quotient mapping condition} \label{sec:quot-mapp-cond}
We first explain the relation of the condition~\eqref{QM} above to the quotient mapping condition defined in~\cite{ST} for subsitutions.
Let $\sigma$ be a primitive substitution on a finite alphabet~$A$ and $M_\sigma = (|\sigma(j)|_i)_{i,j\in A}$ its incidence matrix, where $|\sigma(j)|_i$ denotes the number of occurrences of the letter~$i$ in~$\sigma(j)$.
Let $\mathbf{v} = (v_1,v_2,\ldots,v_n)$ be a left eigenvector of~$M_\sigma$ to the dominant eigenvalue $\beta > 1$, with $v_i \in \mathbb{Q}(\beta)$, and 
\[
L_\sigma = \langle v_i - v_j:\, i,j \in A\rangle_\mathbb{Z}
\]
be the $\mathbb{Z}$-module generated by the differences of coordinates of~$\mathbf{v}$. 
Note that $\mathbf{v}$ and $L_\sigma$ are only defined up to a constant factor, which plays no role in the following. 
The substitution $\sigma$ satisfies the quotient mapping condition if
\[
\mathrm{rank}(L_\sigma) = \deg(\beta) - 1.
\]
This definition is equivalent to Definition~3.13 in \cite{ST}.

Let now $\beta > 1$ be a Parry number, with $\# V_\beta = n$, and $a_1 \cdots a_k (a_{k+1} \cdots a_n)^\omega$ its quasi-greedy $\beta$-expansion of~$1$. 
Then the \emph{$\beta$-substitution}~$\sigma_\beta$ is defined on $A = \{1,2,\ldots,n\}$ by
\begin{align*}
\sigma_\beta: \quad & i \mapsto \underbrace{1\,1\,\cdots\,1}_ {a_i\,\text{times}}\, (i+1) \quad \mbox{if}\ 1 \le i < n, \\
& n \mapsto \underbrace{1\,1\,\cdots\,1}_ {a_n\,\text{times}}\, (k+1).
\end{align*}
Then $(1,\tilde{T}_\beta(1),\ldots,\tilde{T}_\beta^{n-1}(1))$ is a left eigenvector of~$\sigma_\beta$, and $L_{\sigma_\beta} = \langle V_\beta - V_\beta\rangle_\mathbb{Z}$, thus \eqref{QM} holds if and only if the $\beta$-substitution satisfies the quotient mapping condition of~\cite{ST}. 

For an algebraic number $\beta$ with $r$ real and $s$ complex conjugates $\beta_1, \ldots, \beta_r$, $\beta_{r+1},\ldots, \beta_{r+s}$, set
\[
\delta_\infty:\, \mathbb{Q}(\beta) \to \mathbb{R}^r \times \mathbb{C}^s, \quad x \mapsto (x^{(1)}, \ldots, x^{(r)}, x^{(r+1)}, \ldots, x^{(r+s)}),
\]
where $x \mapsto x^{(i)}$ is the Galois embedding $\mathbb{Q}(\beta) \to \mathbb{R}$ or $\mathbb{C}$ that maps $\beta$ to~$\beta^{(i)}$. 

\begin{prop} \label{p:subspace}
A primitive substitution $\sigma$ satisfies the quotient mapping condition if and only if there exists $c \in \mathbb{Q}(\beta)$ such that, for the scalar product, $\delta_\infty(c) \cdot \delta_\infty(v_i) = 1$ for all $i \in A$.
\end{prop}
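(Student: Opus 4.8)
The plan is to reinterpret the scalar product as the trace form on $\mathbb{Q}(\beta)$ and then reduce the statement to a dimension count. First I would observe that, since $\delta_\infty$ records all archimedean embeddings of $\mathbb{Q}(\beta)$ and the two embeddings of each complex-conjugate pair contribute complex-conjugate summands, one has $\delta_\infty(c)\cdot\delta_\infty(x) = \mathrm{Tr}_{\mathbb{Q}(\beta)/\mathbb{Q}}(cx)$ for all $c,x \in \mathbb{Q}(\beta)$. In particular this pairing is $\mathbb{Q}$-valued, and since $\mathbb{Q}(\beta)/\mathbb{Q}$ is separable it is a non-degenerate symmetric $\mathbb{Q}$-bilinear form. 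Thus the condition to be characterised becomes: there exists $c \in \mathbb{Q}(\beta)$ with $\mathrm{Tr}(c\, v_i) = 1$ for every $i \in A$. Writing $W = \langle v_i - v_j : i,j\in A\rangle_\mathbb{Q} = L_\sigma \otimes_\mathbb{Z}\mathbb{Q}$, so that $\mathrm{rank}(L_\sigma) = \dim_\mathbb{Q} W$, and $V = \langle v_1,\dots,v_n\rangle_\mathbb{Q}$, I note that $V = W + \mathbb{Q}\,v_1$.

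The key lemma I would prove is that the coordinates of the eigenvector already span the field: $V = \mathbb{Q}(\beta)$, hence $\dim_\mathbb{Q} V = \deg(\beta)$. To see this, consider the $\mathbb{Q}$-linear map $T\colon \mathbb{Q}^n \to \mathbb{Q}(\beta)$, $T(\mathbf{x}) = \sum_i x_i v_i$, whose image is $V$. The left-eigenvector relation $\mathbf{v} M_\sigma = \beta\mathbf{v}$ rewrites as $T(M_\sigma\mathbf{x}) = \beta\,T(\mathbf{x})$ for all $\mathbf{x}\in\mathbb{Q}^n$, and hence $T(p(M_\sigma)\mathbf{x}) = p(\beta)\,T(\mathbf{x})$ for every $p\in\mathbb{Q}[X]$. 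Choosing $\mathbf{x}_0$ with $T(\mathbf{x}_0)\neq0$ (possible as $\mathbf{v}\neq0$), the vectors $T(M_\sigma^k\mathbf{x}_0) = \beta^k\,T(\mathbf{x}_0)$ lie in $V$ for all $k\ge0$, and their $\mathbb{Q}$-span is $T(\mathbf{x}_0)\,\mathbb{Q}[\beta] = \mathbb{Q}(\beta)$; thus $V = \mathbb{Q}(\beta)$.

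Next I would carry out the linear-algebra reduction. The system $\mathrm{Tr}(c\, v_i) = 1$ for all $i$ is equivalent to $\mathrm{Tr}(c(v_i - v_1)) = 0$ for all $i$ together with $\mathrm{Tr}(c\, v_1) = 1$; the first part says exactly that $c \in W^\perp$ (orthogonal complement for the trace form). A solution $c \in W^\perp$ with $\mathrm{Tr}(c\, v_1) = 1$ exists if and only if the $\mathbb{Q}$-linear functional $c \mapsto \mathrm{Tr}(c\, v_1)$ does not vanish identically on $W^\perp$ (a nonzero $\mathbb{Q}$-functional on the $\mathbb{Q}$-space $W^\perp$ is onto $\mathbb{Q}$, so it attains the value $1$). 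By non-degeneracy of the trace form, this functional vanishes on $W^\perp$ precisely when $v_1 \in (W^\perp)^\perp = W$. Hence the condition of the proposition holds if and only if $v_1 \notin W$.

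Finally, combining with the key lemma: since $V = W + \mathbb{Q}\,v_1$, we have $v_1\notin W$ if and only if $\dim_\mathbb{Q} W = \dim_\mathbb{Q} V - 1 = \deg(\beta) - 1$, i.e. $\mathrm{rank}(L_\sigma) = \deg(\beta)-1$, which is the quotient mapping condition. I expect the main obstacle to be the spanning lemma $V=\mathbb{Q}(\beta)$: it is exactly what forces both implications to go through, since without it $v_1\notin W$ would only give $\mathrm{rank}(L_\sigma)=\dim_\mathbb{Q} V-1$ rather than $\deg(\beta)-1$. The trace identification is the other crucial point, as it keeps the whole argument inside $\mathbb{Q}(\beta)$ and makes both the value $1$ and the orthogonal-complement duality meaningful over $\mathbb{Q}$.
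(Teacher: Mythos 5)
Your proof is correct and takes essentially the same route as the paper: both arguments rest on interpreting the pairing as the (rational, non-degenerate) trace form, taking $c$ orthogonal to $L_\sigma$, and using that $\langle v_i : i \in A\rangle_\mathbb{Q}$ has full rank $\deg(\beta)$ to pin the rank of $L_\sigma$ between $\deg(\beta)-1$ and $\deg(\beta)$. The only difference is one of completeness: you actually prove the full-rank fact (your lemma $V = \mathbb{Q}(\beta)$, via $T(M_\sigma^k \mathbf{x}_0) = \beta^k T(\mathbf{x}_0)$) and you justify the nonvanishing of the common value $q$ needed for the rescaling $c/q$, both of which the paper asserts or leaves implicit.
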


\begin{proof}
If the quotient mapping condition holds, then there exists $c \in \mathbb{Q}(\beta)$ such that $\delta_\infty(c) \cdot \delta_\infty(x) = 0$ for all $x \in L_\sigma$. 
This implies that $\delta_\infty(c) \cdot \delta_\infty(v_i) = \delta_\infty(c) \cdot \delta_\infty(v_j)$ for all $i, j \in A$, with $q = \delta_\infty(c) \cdot \delta_\infty(v_i) \in \mathbb{Q}$. 
Then we have $\delta_\infty(c/q) \cdot \delta_\infty(v_i) = 1$ for all $i \in A$.

For the other direction, suppose that $\delta_\infty(c) \cdot \delta_\infty(v_i) = 1$ for all $i \in A$, thus $\delta_\infty(c) \cdot \delta_\infty(x) = 0$ for all $x \in L_\sigma$. 
Then $L_\sigma$ has rank at most $\deg(\beta) - 1$.
Since $\langle v_i:\, i \in A\rangle_\mathbb{Z}$ has full rank $\deg(\beta)$, the rank of $L_\sigma$ is also at least $\deg(\beta) - 1$. 
\end{proof}

Note that $\delta_\infty(c) \cdot \delta_\infty(v_i) = 1$ for all $i \in A$ means that the vector $(1,1,\ldots,1) \in \mathbb{Z}^n$ lies in the subspace spanned by the left eigenvectors of~$M_\sigma$ to the eigenvalues that are Galois conjugates of~$\beta$. 

We can now prove the characterization of~\eqref{QM} for $\beta$ with $\# V_\beta = \deg(\beta) + 1$. 

\begin{proof}[Proof of Theorem~\ref{t:2}]
If $\# V_\beta = n = \deg(\beta) + 1$, then the eigenvalues of $M_{\sigma_\beta}$ are the conjugates of~$\beta$ and~$-1$. 
Note that $1$ cannot be an eigenvalue of $M_{\sigma_\beta}$ because the characteristic polynomial of~$M_{\sigma_\beta}$ is 
\begin{equation} \label{e:betapolynomial}
(x^n - a_1 x^{n-1} - a_2 x^{n-2} - \cdots - a_n) - (x^k - a_1 x^{k-1} - a_2 x^{k-2} - \cdots - a_k).
\end{equation}
The right eigenspace to the eigenvalue~$-1$ is spanned by $\mathbf{w} = {}^t(w_1,w_2,\ldots,w_n)$ with 
\[
w_i = \left\{\begin{array}{cl} (-1)^i & \mbox{if}\ k < i \le n,\\[.5ex] (-1)^i\,\big(1-(-1)^{n-k}\big) & \mbox{if}\ 1 \le i \le k. \end{array}\right.
\]
Indeed, we have $w_i + w_{i+1} = 0$ for $1 \le i < k$ and $k < i < n$, and $w_k + w_{k+1} + w_n = 0$.
By Proposition~\ref{p:subspace}, \eqref{QM} is equivalent to the vector $\mathbf{1} = (1,1,\ldots,1)$ lying in the subspace spanned by the left eigenvectors corresponding to the conjugates of~$\beta$. 
This means that $\mathbf{1}$ is orthogonal to~$\mathbf{w}$, i.e., that $n - k$ is even. 
\end{proof}

\section{Equivalence of Hypothesis~B and (PF) \& (QM)}
In this section, let $(G_k)_{k\ge 0}$ be a sequence associated to $\beta > 1$, as defined in~\eqref{e:Gk}.
If $\beta$ is a Parry number, then we can write
\begin{equation} \label{e:GM}
G_k = (1,1,\ldots,1)\, M_{\sigma_\beta}^k\, {}^t(1,0,\ldots,0).
\end{equation}

\begin{lemma} \label{l:QMminimal}
Property \eqref{QM} holds if and only if $(G_k)_{k\ge0}$ satisfies a recurrence with the minimal polynomial of~$\beta$ as characteristic polynomial.
\end{lemma}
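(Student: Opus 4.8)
The plan is to work entirely with the matrix formula~\eqref{e:GM}, writing $G_k = \mathbf{1}\, M^k\, \mathbf{e}$, where $M = M_{\sigma_\beta}$, $\mathbf{1} = (1,1,\ldots,1)$ is a row vector, and $\mathbf{e} = {}^t(1,0,\ldots,0)$. I work under the assumption that $\beta$ is a Parry number, so that $M_{\sigma_\beta}$ and~\eqref{e:GM} are available; this is the relevant case. Writing the minimal polynomial of~$\beta$ as $m_\beta(x) = x^{n'} - \sum_{j=1}^{n'} b_j\, x^{n'-j}$ with $n' = \deg(\beta)$, the assertion that $(G_k)$ satisfies a recurrence with characteristic polynomial~$m_\beta$ unwinds to $\mathbf{1}\, M^k\, m_\beta(M)\, \mathbf{e} = 0$ for all $k \ge 0$; since powers of~$M$ commute, this is the same as $\mathbf{1}\, m_\beta(M)\, M^k\mathbf{e} = 0$ for all $k \ge 0$.

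The key step, and the one I expect to be the main obstacle, is to show that $\mathbf{e}$ is a \emph{cyclic vector} for~$M$, i.e.\ that $\mathbf{e}, M\mathbf{e}, \ldots, M^{n-1}\mathbf{e}$ form a basis of~$\mathbb{R}^n$, where $n = \#V_\beta$. From the definition of~$\sigma_\beta$ one reads off that $M\mathbf{e}_j = a_j\, \mathbf{e}_1 + \mathbf{e}_{j+1}$ for $1 \le j < n$ (and $M\mathbf{e}_n = a_n\, \mathbf{e}_1 + \mathbf{e}_{k+1}$), where $\mathbf{e}_1,\ldots,\mathbf{e}_n$ is the standard basis and $\mathbf{e} = \mathbf{e}_1$. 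A short induction then gives $M^j\mathbf{e}_1 \in \langle \mathbf{e}_1,\ldots,\mathbf{e}_{j+1}\rangle$ with the coefficient of~$\mathbf{e}_{j+1}$ equal to~$1$, for $0 \le j \le n-1$ (the step for~$M^{j+1}$ only uses the formula for $M\mathbf{e}_i$ with $i \le j+1 < n$). The resulting coordinate matrix is triangular with ones on the diagonal, hence the vectors are independent. This is where the specific near-companion shape of $M_{\sigma_\beta}$ — subdiagonal ones together with the first row of digits — is essential.

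Granting cyclicity, $\{M^k\mathbf{e} : k \ge 0\}$ spans~$\mathbb{R}^n$, and therefore $\mathbf{1}\, m_\beta(M)\, M^k\mathbf{e} = 0$ for all~$k$ is equivalent to the single vanishing $\mathbf{1}\, m_\beta(M) = 0$. It then remains to identify $\mathbf{1}\, m_\beta(M) = 0$ with~\eqref{QM}. Since $m_\beta$ is separable with roots exactly the Galois conjugates $\beta^{(1)},\ldots,\beta^{(n')}$ of~$\beta$, all of which are eigenvalues of~$M$ (as $m_\beta \mid \chi_M$), the primary decomposition shows that the left kernel of~$m_\beta(M)$ equals $\bigoplus_i E^{\mathrm{left}}_{\beta^{(i)}}$, the subspace spanned by the left eigenvectors of~$M$ to the conjugates of~$\beta$. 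By Proposition~\ref{p:subspace} and the observation immediately following it, $\mathbf{1}$ lies in this subspace if and only if $\beta$ satisfies~\eqref{QM}. Chaining the three equivalences proves the lemma.

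A secondary point to handle with care is the primary-decomposition step when $M$ is not diagonalizable, which is precisely why it matters that $m_\beta$ is squarefree: squarefreeness forces any vector in $\ker m_\beta(M)$ into the honest eigenspaces, regardless of the Jordan structure of the conjugates of~$\beta$ inside~$M$. As an optional complement one may record that the characteristic polynomial is in fact \emph{exactly} $m_\beta$ under~\eqref{QM}: by Perron--Frobenius the dominant term of $G_k = \sum_i c_i\,(\beta^{(i)})^k$ has $c_\beta \neq 0$, and a Galois-symmetry argument on the (rational) coefficients then forces every conjugate $\beta^{(i)}$ to occur, so that $m_\beta$ divides the minimal recurrence polynomial and equality holds.
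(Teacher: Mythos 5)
Your proof is correct and takes essentially the same route as the paper: formula~\eqref{e:GM} together with Proposition~\ref{p:subspace} (and the remark following it) reduces~\eqref{QM} to the vector $\mathbf{1}$ lying in the span of the left eigenvectors of~$M_{\sigma_\beta}$ to the conjugates of~$\beta$, which is then matched against the recurrence for~$(G_k)$. Your explicit cyclic-vector computation for $\mathbf{e} = {}^t(1,0,\ldots,0)$ is exactly the content the paper compresses into the phrase \lq\lq by the structure of $M_{\sigma_\beta}$\rq\rq\ in the converse direction, and your kernel-decomposition remark (using that $m_\beta$ is squarefree, so no generalized eigenvectors intervene) makes rigorous a point the paper leaves implicit.
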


\begin{proof}
If \eqref{QM} holds, then, by \eqref{e:GM} and Proposition~\ref{p:subspace}, $G_k$~satisfies a recurrence with the minimal polynomial of~$\beta$ as characteristic polynomial.

If \eqref{QM} does not hold, then $\mathbf{1} = (1,1,\ldots,1)$ does not lie in the subspace spanned by the eigenvectors corresponding to the conjugates of~$\beta$.
Let $\mathbf{1} = \mathbf{b} + \mathbf{c}$ be the decomposition in a vector $\mathbf{b}$ lying in this subspace and $\mathbf{c}$ lying in the complementary invariant subspace. 
By the structure of $M_{\sigma_\beta}$, we obtain that $\mathbf{c}\, M_{\sigma_\beta}^k {}^t(1,0,\ldots,0) \ne 0$ for some $k \ge 0$, thus $G_j$ does not satisfy a recurrence with the minimal polynomial of~$\beta$ as characteristic polynomial.
\end{proof}

\begin{lemma} \label{l:BPF}
Hypothesis~B implies~\eqref{PF}, in particular $\beta$ is a Pisot number. 
\end{lemma}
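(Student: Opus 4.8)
The plan is to exploit the dictionary between the $G$-numeration of the nonnegative integers and the $\beta$-expansions, and to read Hypothesis~B as a bound on the downward propagation of carries. First I would recall the reformulation noted right after the definition of~\eqref{PF}: the condition is equivalent to $\bigcup_{n\ge0}\beta^n\mathrm{Fin}(\beta)$ being closed under addition, and since each $\beta^{-i}$ lies in this set, it is in turn equivalent to the statement that every real number $v=\sum_{k\ge0}c_k\beta^k$ with $c_k\in\mathbb{Z}_{\ge0}$ (finitely many nonzero) has a finite greedy $\beta$-expansion. So the goal becomes to show that Hypothesis~B forces every such~$v$ into $\bigcup_{n\ge0}\beta^n\mathrm{Fin}(\beta)$.

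The central observation is that $(G_k)_{k\ge0}$ and $(\beta^k)_{k\ge0}$ satisfy the \emph{same} linear recurrence at large indices, namely $u_k=\sum_{j=1}^d a_j u_{k-j}+u_{k-d}$ for $k\ge d$ when $a_1a_2\cdots$ has period~$d$ (the relevant case once the Pisot conclusion is in hand), because $\beta$ is a root of the characteristic polynomial of~\eqref{e:Gk}; moreover the $G$-admissibility condition~\eqref{eq1} matches Parry's admissibility~\eqref{admissibility}. Consequently, as long as a normalization (a sequence of carries) takes place only among positions $\ge d$, it preserves the value $\sum_k\gamma_k G_k$ and the value $\sum_k\gamma_k\beta^k$ \emph{simultaneously}, and a finite $G$-admissible string there is at the same time a finite admissible $\beta$-expansion. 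The subtlety — which is the whole content of~\eqref{PF} — is that carries reaching the low positions $k<d$ are governed in the $G$-world by $G_k=\sum_{j=1}^k a_j G_{k-j}+1$, keeping everything among nonnegative positions, whereas the analogous $\beta$-relation spills into the fractional positions and may do so indefinitely.

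Here Hypothesis~B enters: its integer~$b$ guarantees that a block of $b$ consecutive zero digits blocks the downward propagation of a carry produced by adding a higher basis element. I would therefore fix $v=\sum_{k=0}^K c_k\beta^k$, choose $n$ large, and consider the genuine nonnegative integer $P=\sum_{k=0}^K c_k G_{k+n}$, built by adding the basis elements $G_{k+n}$ one at a time starting from~$0$. Using Hypothesis~B repeatedly — at each step the current partial sum has a long run of zeros below the region where the digits accumulate — I would argue that no carry ever descends below position $n-C$, where $C$ depends only on~$b$ and on $\sum_k c_k$. Choosing $n\ge C+d$ then confines the entire normalization to positions $\ge d$, so by the previous paragraph the finite greedy $G$-expansion of~$P$ is at the same time the finite greedy $\beta$-expansion of $\sum_{k=0}^K c_k\beta^{k+n}=\beta^n v$; dividing by~$\beta^n$ shows $v\in\bigcup_{n\ge0}\beta^n\mathrm{Fin}(\beta)$, which is~\eqref{PF}. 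For the final assertion I would derive that $\beta$ is Pisot either directly — reading the bounded-carry property through the embeddings $\delta_\infty$ and the expansion $G_k=\sum_i u_i\beta_i^k$ over the conjugates~$\beta_i$, a conjugate with $|\beta_i|\ge1$ would force arbitrarily long carries, contradicting Hypothesis~B — or, once~\eqref{PF} is established, by the Frougny–Solomyak argument of~\cite{frougny}.

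I expect the main obstacle to be the carry-descent bound: making precise, and uniform in the number of summands, the claim that under Hypothesis~B the carries triggered by the additions $+\,G_{k+n}$ never reach the low positions. This requires controlling how the ``low frontier'' of the successive partial sums can creep downward and ensuring that the $b$-gaps needed to invoke Hypothesis~B genuinely persist from one addition to the next, so that the hypothesis is applicable at every step; the bookkeeping here, rather than the underlying idea, is where the work lies.
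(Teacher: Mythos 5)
Your overall architecture is close in spirit to the paper's proof: the paper also forms a nonnegative integer combination of high-index basis elements (there $N_k' = \sum_{j=1}^\ell x_j G_{k-j} + G_{k-n}$), uses Hypothesis~B to guarantee that its low $G$-digits vanish, and contradicts this by showing that deep digits of the greedy $G$-expansion must agree with the (nonzero) digits of an infinite $\beta$-expansion. Also, the carry bookkeeping you flag as the main obstacle is in fact unproblematic: a simple induction shows that each application of Hypothesis~B lowers the guaranteed all-zero window by at most~$b$ (if $P_t$ has zero digits below $n-tb$, apply the hypothesis with the window $[\,n-(t+1)b,\, n-tb)$ to conclude $P_{t+1}$ has zero digits below $n-(t+1)b$), so with $S = \sum_k c_k$ summands the descent is at most $Sb$, uniformly.

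The genuine gap is elsewhere, in the transfer step between the two numeration systems. Your mechanism --- that $(G_k)$ and $(\beta^k)$ satisfy the same recurrence at indices $\ge d$, so a normalization confined to high positions preserves both values simultaneously --- fails on two counts. First, it presupposes that $a_1a_2\cdots$ is (purely) periodic; your parenthetical \lq\lq the relevant case once the Pisot conclusion is in hand\rq\rq\ is circular, since the Pisot property is precisely what the lemma is to deliver, and the argument must run for arbitrary $\beta>1$. Second, even granting periodicity, the greedy algorithm computing the $G$-digits of $P$ is not a chain of substitutions by the shared recurrence: the defining relation $G_k = \sum_{j=1}^k a_j G_{k-j} + 1$ carries the extra \lq\lq$+1$\rq\rq, and more structurally, the $\mathbb{Z}$-module of relations among the $G_k$ can strictly exceed that of the $\beta^k$ --- by Lemma~\ref{l:QMminimal} this happens exactly when \eqref{QM} fails, and \eqref{QM} is not available here (Hypothesis~B $\Rightarrow$ \eqref{QM} is Lemma~\ref{l:BQM}, whose proof itself relies on the present lemma). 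So \lq\lq $\sum_j \epsilon_j(P)\, G_j = \sum_k c_k G_{k+n}$ implies $\sum_j \epsilon_j(P)\, \beta^j = \beta^n v$\rq\rq\ is unjustified as stated. The paper closes exactly this gap analytically: it uses the asymptotics $G_j = c\,\beta^j + \mathcal{O}(\alpha^j)$ ($0<\alpha<\beta$, valid for all $\beta>1$) together with the strict bounds $0 < T_\beta^i(y) < 1$ along an infinite expansion to verify the greedy inequalities $0 \le N_k' - \sum_{j=1}^i y_j G_{k-j} < G_{k-i}$ directly, inside a minimal-counterexample setup ($y$ non-finite but $y - \beta^{-n}$ finite) that needs only a single application of Hypothesis~B. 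Your proof would be repaired by importing this asymptotic digit-matching in place of the recurrence argument. A minor further point: for the concluding assertion, \cite{frougny} alone gives Pisot only from \eqref{F}; passing from \eqref{PF} to Pisot also requires Akiyama's theorem, as the paper cites.
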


\begin{proof}
The proof is done by contradiction. Assume that \eqref{PF} does not hold.
Then there is some $y \in \mathbb{Z}_+[\beta^{-1}] \cap [0,1)$ with $y \notin \mathrm{Fin}(\beta)$.
We can choose~$y$ minimal in the sense that $x = y - \beta^{-n} \in \mathbb{Z}_+[\beta^{-1}] \cap \mathrm{Fin}(\beta)$ for some $n > 0$. 
Let $x = \sum_{j=1}^\ell x_j \beta^{-j}$ be the (finite) $\beta$-expansion of~$x$, $y = \sum_{j=1}^\infty y_j \beta^{-j}$ the (infinite) $\beta$-expansion of~$y$. 
Suppose that Hypothesis~B holds for some $b > 0$.
Choose $h > \ell+b$ such that $y_h \ne 0$, and set $N_k = \sum_{j=1}^\ell x_j G_{k-j}$, $N_k'=N_k+G_{k-n}$ for $k\geq \ell$.
We show that $y_j = \epsilon_{k-j}(N_k')$ for all $1 \le j \le h$ and sufficiently large~$k$, contradicting Hypothesis~B since $N_k = \sum_{j=k-\ell}^\infty \epsilon_j(N_k) G_j$ and $\epsilon_{k-h}(N_k + G_{k-n}) = y_h \ne 0 = \epsilon_{k-h}(N_k)$. 

To find the $G$-expansion of~$N_k'$, recall that $G_j = c\,\beta^j+\mathcal{O}(\alpha^j)$ for some constant $c > 0$ and $0 < \alpha <\beta$; see e.g.\ \cite{bertrand, Ito_Takahashi}. 
We have thus $N_k' = c\, y\, \beta^k + \mathcal{O}(\alpha^k) + \mathcal{O}(1)$, and 
\[
N_k' - \sum_{j=1}^i y_j G_{k-j} =  c\, T_\beta^i(y)\, \beta^{k-i} + \mathcal{O}(\alpha^{k-i}) + \mathcal{O}(1) 
\]
for all $1 \le i \le k$. 
As $0 < T_\beta^i(y) < 1$ for all $i \ge 0$, we obtain that
\[
0 \le N_k' - \sum_{j=1}^i y_j G_{k-j} < G_{k-i}
\]
for all $1 \le i \le h$, provided that $k$ is sufficiently large. 
This proves that $y_1, \ldots, y_h$ are the digits of the greedy $G$-expansion of~$N_k'$, i.e., $y_j = \epsilon_{k-j}(N_k')$ for all $1 \le j \le h$.

Finally, by \cite[Theorem~1]{akiyama2006} and \cite[Lemma~1]{frougny}, the condition \eqref{PF} implies that $\beta$ is a Pisot number.
\end{proof}

\begin{remark}
Condition~\eqref{PF} does not imply Hypothesis~B. Moreover, even \eqref{F} would not be sufficient. 
As an example let $\beta$ be ths smallest Pisot number, $\beta^3 = \beta + 1$. 
Then we have $a_1 a_2 \cdots = (10000)^\infty$, and $G$ satisfies the linear recurrence $G_k=G_{k-1}+G_{k-5}$. Its associated characteristic polynomial is $x^5-x^4-1$, which is reducible in the product $(x^3-x-1)(x^2-x+1)$. We know that \eqref{F} holds by~\cite{Akiyama}. However, Hypothesis~B does not hold because of the following relation among the elements of the recurrence.
\begin{equation*}
G_{k+3}=G_{k+1}+G_k+\begin{cases}
0 & \mbox{if}\ k\equiv 1\mod 3\\
-1 & \mbox{if}\  k\equiv -1, 0\mod 6\\
1& \mbox{if}\ k\equiv 2,3\mod 6\ .
\end{cases}
\end{equation*}
This property, easily provable by induction, shows that Hypothesis~B does not hold, since if we sum up $\tilde{N}=G_n$ and $G_{n+1}$, then in the second case considered above the first digit will change.
More generally, we have the following lemma.
\end{remark}

\begin{lemma} \label{l:BQM}
Hypothesis~B implies \eqref{QM}.
\end{lemma}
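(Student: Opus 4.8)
The plan is to argue by contradiction and to funnel everything, via Lemma~\ref{l:QMminimal}, through the recurrence satisfied by $(G_k)$. By that lemma, \eqref{QM} fails precisely when $(G_k)$ does not satisfy the recurrence whose characteristic polynomial is the minimal polynomial $m_\beta$ of~$\beta$. Writing $E$ for the shift $E\,G_k = G_{k+1}$ and $m_\beta(x) = x^d - m_1 x^{d-1} - \cdots - m_d$ with $d = \deg\beta$, I would introduce the integer ``defect'' sequence $S_k = m_\beta(E)\,G_k = G_{k+d} - m_1 G_{k+d-1} - \cdots - m_d G_k$. Since $m_\beta$ divides the characteristic polynomial $P = m_\beta\,Q$ of $M_{\sigma_\beta}$ from~\eqref{e:betapolynomial}, the sequence $S$ satisfies the linear recurrence with characteristic polynomial~$Q$, and by Lemma~\ref{l:QMminimal} we have $S \equiv 0$ if and only if \eqref{QM} holds. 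Decomposing $G_k$ into a part supported on $\beta$ and its conjugates and a complementary ``parasitic'' part, and noting that $m_\beta(E)$ annihilates the former, one sees that $S$ is governed solely by the parasitic roots of~$Q$, i.e.\ the eigenvalues of $M_{\sigma_\beta}$ that are not conjugates of~$\beta$. Recall also that Hypothesis~B already forces $\beta$ to be Pisot by Lemma~\ref{l:BPF}.

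Assuming \eqref{QM} fails, so that $S \not\equiv 0$, I would reproduce the mechanism exhibited in the Remark above for $\beta^3 = \beta+1$. The decisive point is that, when the parasitic roots of~$Q$ have modulus at most~$1$, the integer sequence $S$ is bounded; its state vectors $(S_k,\ldots,S_{k+\deg Q-1})$ then take only finitely many values, so $S$ is eventually periodic and, being nonzero, equals a fixed integer $s \ne 0$ along an arithmetic progression of indices~$k$. For such~$k$ the relation $G_{k+d} - m_1 G_{k+d-1} - \cdots - m_d G_k = s$ is a $\mathbb{Z}$-linear relation with bounded coefficients among the high-index generators $G_k,\ldots,G_{k+d}$ whose value is the fixed nonzero integer~$s$; rearranging it as $A = B + s$ with $A, B$ nonnegative combinations of these generators. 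As in the identity $G_{n}+G_{n+1} = G_{n+3}-1$ of the Remark, the relevant side is a bounded perturbation $G_M - |s|$ of a generator, whose greedy expansion — by a borrow cascading all the way down, since $|s|$ is fixed — has a nonzero digit at the fixed low position~$0$, while the comparison number has vanishing low digits. Thus a single addition of a high generator changes the digit at position~$0$, and since~$k$ may be taken arbitrarily large, this produces, for every prescribed gap~$b$, a number $N$ supported on high indices and an $m \ge k+b$ for which $N + G_m$ alters a digit below the window, contradicting Hypothesis~B.

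The main obstacle is the boundedness of~$S$, equivalently that the parasitic roots of~$Q$ have modulus at most~$1$: this is exactly what lets $|s|$ be a fixed quantity, so that the borrow reaches the fixed position~$0$ rather than stalling at a level growing with~$k$. Boundedness does \emph{not} follow from $\beta$ being Pisot alone, since Pisotness constrains only the conjugates of~$\beta$ and says nothing about the remaining eigenvalues of $M_{\sigma_\beta}$; it must instead be extracted from the combinatorial structure of a valid quasi-greedy expansion together with \eqref{PF}, available here through Lemma~\ref{l:BPF} and the dichotomy of~\cite{akiyama2006}. On the examples the parasitic factor~$Q$ is in fact a product of cyclotomic polynomials — for instance $Q = x^2 - x + 1$ for $\beta^3 = \beta+1$ — so that $S$ is not merely bounded but periodic, and I expect this to persist in general. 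A secondary, more routine difficulty is the bookkeeping needed to cast the relation $A = B + s$ into the precise ``addition of a single $G_m$ across a gap of zeros'' required by Hypothesis~B; I would handle it by first using Hypothesis~B itself to reduce bounded nonnegative combinations of high-index generators to their greedy form, thereby controlling the positions at which $A$ and $B$ can disagree.
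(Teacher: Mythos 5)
Your setup coincides with the paper's: Pisotness via Lemma~\ref{l:BPF}, the defect sequence $S_k$ (the paper's $f_k=\sum_{i=0}^d p_iG_{k+i}$, with $\sum_i p_ix^i$ the minimal polynomial of~$\beta$) nonzero for infinitely many $k$ via Lemma~\ref{l:QMminimal}, and a contradiction with Hypothesis~B obtained by comparing low digits across the integer identity $A=B+s$. But the step you yourself flag as ``the main obstacle'' is a genuine gap: your mechanism needs $S_k$ to be bounded, indeed eventually periodic with a fixed nonzero value $s$ along an arithmetic progression, which amounts to the parasitic roots of the cofactor $Q$ lying in the closed unit disk (and being roots of unity for the periodicity). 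You offer no proof, only the expectation that $Q$ is cyclotomic ``in general'', and that expectation is unfounded: the non-minimal factors of Parry-type polynomials such as~\eqref{e:betapolynomial} are only known to have roots of modulus below the golden ratio (Flatto--Lagarias--Poonen, Solomyak), and they can genuinely have modulus greater than~$1$. Pisotness and \eqref{PF} constrain the conjugates of~$\beta$, not the parasitic spectrum, and under the contradiction hypothesis (failure of \eqref{QM}) nothing prevents $S_k$ from growing exponentially, so the ``fixed $s$, borrow cascading to position~$0$'' argument stalls exactly where you feared. (A smaller, repairable inaccuracy: even for fixed $s$ the greedy expansion of $G_M-|s|$ need not have a nonzero digit at position~$0$ — for $\beta^3=\beta+1$ the expansion of $G_M-1$ is $a_1\cdots a_M$, whose last digit is usually~$0$ — only at bounded height.)

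The paper's proof shows the obstacle is illusory: boundedness of $f_k$ is never needed, only $f_k=\mathcal{O}(\alpha^k)$ for some $\alpha<\beta$, which is immediate from $G_k=c\beta^k+\mathcal{O}(\alpha^k)$ because the minimal polynomial annihilates the dominant term. Then the greedy expansion of $f_k$ occupies positions at most about $k\log_\beta\alpha+\mathcal{O}(1)$, leaving below position~$k$ a digit-free gap whose length tends to infinity; this gap replaces your fixed position~$0$. Splitting the minimal polynomial by sign of the coefficients, Hypothesis~B forces the $G$-expansion of $\sum_{p_i>0}p_iG_{k+i}$ to end in at least $k-b\sum_{p_i>0}p_i$ zeros, while forcing the $G$-expansion of $\sum_{p_i<0}|p_i|G_{k+i}+f_k$ to have low digits equal to those of $f_k\neq0$ for $k$ large (the windows required by Hypothesis~B fit inside the growing gap). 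Since both expressions equal the same integer and greedy expansions are unique, this is the contradiction — with no periodicity, no arithmetic progression, and no cascade of borrows. If you replace your boundedness requirement by this relative-smallness estimate and run your $A=B+s$ comparison with the digits of $f_k$ sitting at height bounded by a fixed fraction of~$k$ rather than at height $\mathcal{O}(1)$, your argument closes and becomes the paper's.
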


\begin{proof}
We know from Lemma~\ref{l:BPF} that $\beta$ is a Pisot number; let $\sum_{i=0}^d p_i \beta^i$ be its minimal polynomial.
Since $G_k = c\,\beta^k+\mathcal{O}(\alpha^k)$, with $c > 0$ and $0 < \alpha <\beta$, we have 
\[
f_k = \sum_{i=0}^d p_i G_{k+i} = \mathcal{O}(\alpha^k).
\]
If \eqref{QM} does not hold, then Lemma~\ref{l:QMminimal} implies that $f_k \ne 0$ for infinitely many~$k$. Assume that $f_k > 0$, the case $f_k < 0$ being symmetric. 
Hypothesis~B implies that the $G$-expansion of $\sum_{0\le i\le d:\,p_i>0} p_i G_{k+i}$ has no small terms, more precisely it ends with at least $k-\sum_{1\le i \le d:\, p_i>0} p_i b$ zeros. 
It also implies that the $G$-expansion of $\sum_{1\le i\le d:\,p_i<0} |p_i| G_{k+i} + f_k$ has small terms equal to~$f_k$ for sufficiently large~$k$. 
This contradicts that $\sum_{1\le i \le d:\,p_i>0} p_i G_{k+i} = \sum_{1\le i\le d:\,p_i<0} |p_i| G_{k+i} + f_k$. 
\end{proof}

\begin{lemma}\label{l:PF+QM-B}
The properties \eqref{PF} and \eqref{QM} imply Hypothesis~B.
\end{lemma}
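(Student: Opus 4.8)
The plan is to reduce Hypothesis~B to a statement about bounded carry propagation and to feed that statement with the two structural inputs provided by~\eqref{QM} and~\eqref{PF}. Throughout I would fix $N = \sum_{i<k}\epsilon_i(N)G_i + \sum_{j\ge k+b}\epsilon_j(N)G_j$ as in the statement, write $R = \sum_{i<k}\epsilon_i(N)G_i$ for the protected head (so $0 \le R < G_k$ and the word $\epsilon_0(N)\cdots\epsilon_{k-1}(N)$ is admissible) and $T = \sum_{j\ge k+b}\epsilon_j(N)G_j$ for the tail, so that $N + G_m = R + (T + G_m)$ with $m \ge k+b$. It then suffices to prove two things: (i) the greedy $G$-expansion of $T + G_m$ has no nonzero digit in the positions $0,\ldots,k-1$, and in fact leaves a block of zeros just above position~$k-1$; and (ii) concatenating this expansion with that of~$R$ is again admissible. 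Granting (i) and (ii), the digits of $N + G_m$ in positions $<k$ coincide with those of~$R$, hence with those of~$N$, which is exactly Hypothesis~B. Part~(ii) I expect to be routine: a long zero gap separates the top of~$R$ from the lowest nonzero digit produced in~(i), so Parry's condition~\eqref{admissibility} across the junction is inherited from the admissibility of the two pieces.

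The substance is in~(i), for which I would first use~\eqref{QM}. By Lemma~\ref{l:QMminimal} the sequence $(G_k)$ satisfies the recurrence given by the minimal polynomial of~$\beta$, so $G_k = \sum_{i=1}^{d} c_i (\beta^{(i)})^k$ with $c_1 = c > 0$ and $\alpha = \max_{i\ge 2}|\beta^{(i)}| < 1$, the last inequality because $\beta$ is Pisot (a consequence of~\eqref{PF}). This exact expansion over the conjugates is what makes the argument work: the defect created by a carry decays like~$\alpha^k$, whereas without~\eqref{QM} a surviving component on the unit circle would prevent carries from ever stabilizing, exactly as in the relation $G_{k+3} = G_{k+1} + G_k \pm 1$ of the Remark. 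Passing to the Galois embedding $\delta_\infty$, I would track a carry by its value together with its conjugate coordinates: since all digits are bounded and $|\beta^{(i)}| < 1$ for $i \ge 2$, the element $\theta \in \mathbb{Z}_+[\beta^{-1}]$ entering the zero block from above satisfies $0 \le \theta < \beta^{-b + O(1)}$ and has $\delta_\infty(\theta)$ confined to a fixed bounded region.

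The hard part is to turn this into a \emph{uniform} bound on how far the carry descends, and this is where~\eqref{PF} enters through Akiyama's dichotomy~\cite{akiyama2006}: \eqref{PF} implies that either~\eqref{F} holds or $a_1 \ge a_2 \ge \cdots$. In the first case I would invoke that~\eqref{F} makes normalization realizable by a finite transducer~\cite{frougny}; equivalently, the carry remainders $\beta^{b}\theta$ live in a bounded subset of $\mathbb{Z}[\beta^{-1}]$ whose elements, being finite, have uniformly bounded expansion length, so a block of more than that many zeros absorbs the carry. In the second case the explicit positive normalization available under $a_1 \ge a_2 \ge \cdots$ yields the same bounded-delay absorption directly. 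In either case, choosing $b$ larger than this absorption length establishes~(i): the carry triggered by adding~$G_m$ cannot cross the zero block into the positions below~$k$. I expect the genuinely delicate point to be precisely the uniformity of the absorption length across all~$k$, all $m$, and all admissible~$N$; the conjugate-coordinate boundedness coming from~\eqref{QM} together with the finiteness coming from~\eqref{PF} and~\eqref{F} is exactly what forces the set of possible carry states to be finite, and this is the crux of the whole proof.
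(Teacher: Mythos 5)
Your plan matches the paper's proof essentially step for step: the same head/tail decomposition with a protecting zero block, \eqref{QM} via Lemma~\ref{l:QMminimal} to identify $G$-arithmetic with $\beta$-arithmetic, a uniform carry-absorption length coming from the Pisot property together with~\eqref{PF}, and a junction/concatenation argument (the paper quantifies your ``long zero gap'' by $L'$, the longest run of zeros in $a_1a_2\cdots$, finite since Pisot numbers are Parry), yielding $b = L + L'$. The only divergence is in provenance rather than substance: where the paper simply cites \cite[Proposition~2]{frougny} for the uniform bound $L$, you sketch that proposition's proof (finiteness of the carry-state set from bounded conjugate coordinates), and your detour through Akiyama's dichotomy at that point is unnecessary --- \eqref{PF} alone places every carry remainder in $\mathbb{Z}_+[\beta^{-1}]\cap[0,1)\subseteq\mathrm{Fin}(\beta)$, and the Pisot finite-state argument then gives the uniform absorption length in both branches of the dichotomy at once.
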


\begin{proof}
By \eqref{QM} and Lemma~\ref{l:QMminimal}, arithmetic operations, that is addition and carries, on the strings of digits defining $G$-expansions can be performed in the same way as for $\beta$-expansions. 
As \eqref{PF} implies that $\beta$ is a Pisot number, there exists by \cite[Proposition~2]{frougny} some $L$ such that, for each $x \in \mathbb{Z}_+[\beta^{-1}] \cap [0,1-\beta^{-n})$ with $T_\beta^\ell(x) = 0$, $\ell \ge n$, we have $T_\beta^{\ell+L}(x+\beta^{-n}) = 0$.
This implies that addition of~$G_m$ to $N = \sum_{j=k+L}^\infty \epsilon_j(N) G_j$, $m \ge k+L$, does not change the first $k$ digits in the $G$-expansion. 
Let furthermore $L'$ be the longest run of $0$'s in the quasi-greedy $\beta$-expansion of~$1$.
Then addition of $M < G_{k-L'}$ to $N = \sum_{j=k+L}^\infty \epsilon_j(N) G_j$ or $N + G_m = \sum_{j=k}^\infty \epsilon_j(N+G_m) G_j$ is performed by concatenating the corresponding $G$-expansions.
Therefore, Hypothesis~B holds with $b = L+L'$.
\end{proof}

Lemmas~\ref{l:BPF}, \ref{l:BQM} and \ref{l:PF+QM-B} prove Theorem~\ref{t:3}.

\section{Purely discrete spectrum}
The set $\mathcal{K}_\beta$ of $\beta$-admissible sequences is
\[
\mathcal{K}_\beta = \Big\{(\epsilon_j)_{j\geq 0}\in \{0,1,\ldots, \lceil \beta \rceil-1\}^\mathbb{N}:\, \sum_{j=1}^k \frac{\epsilon_{k-j}}{\beta_j} \in [0,1)\ \mbox{for all}\ k\geq 1\Big\}
\]
and is equal to~$\mathcal{K}_G$. 
Similarly to $\delta_\infty$ in Section~\ref{sec:quot-mapp-cond}, we define $\delta'_\infty$ for $\beta = \beta_1 \in \mathbb{R}$ as
\[
\delta'_\infty:\, \mathbb{Q}(\beta) \to \mathbb{R}^{r-1} \times \mathbb{C}^s, \quad x \mapsto (x^{(2)}, \ldots, x^{(r)}, x^{(r+1)}, \ldots, x^{(r+s)}),
\]
and 
\[
\delta'_\beta:\, \mathbb{Z}[\beta] \to \mathbb{K}_\beta' = \mathbb{R}^{r-1} \times \mathbb{C}^s \times \varprojlim \mathbb{Z}[\beta] / \beta^n \mathbb{Z}[\beta], \quad x \mapsto \big(\delta'_\infty(x), \delta_\mathrm{f}(x)\big)
\]
with the natural projection $\delta_\mathrm{f}$ from $\mathbb{Z}[\beta]$ to the inverse limit $\varprojlim \mathbb{Z}[\beta] / \beta^n \mathbb{Z}[\beta]$. 
Setting 
\[
\varphi_\beta:\, \mathcal{K}_\beta \to \mathbb{K}_\beta', \quad (\epsilon_j)_{j\geq 0} \mapsto \sum_{j=0}^\infty \delta_\beta'(\epsilon_j\beta^j),
\]
the \emph{Rauzy fractal} or \emph{central tile} is defined by
\[
\mathcal{R}_\beta = \varphi_\beta(\mathcal{K}_\beta)
\]
(see e.g.\ \cite{MS}). 
Note that $\delta'_\beta$ is defined differently in~\cite{MS}; the relation to our inverse limit definition is described in~\cite{Hejda-Steiner}.

\begin{proof}[Proof of Theorem~\ref{t:1}]
For each $x\in\mathcal{K}_G$ with $\tau_G(x) \ne (0,0,\ldots)$, we have
\[
\varphi_\beta(\tau_G(x)) - \varphi_\beta(x) = \delta_\beta'(\beta^k - a_1 \beta^{k-1} - a_2 \beta^{k-2} -  \cdots - a_k) = \delta_\beta'(\tilde{T}_\beta^k(1)) 
\]
for some $k \ge 0$, thus 
\[
\varphi_\beta(\tau_G(x)) - \varphi_\beta(x) \in \delta_\beta'(V_\beta) \subset \delta_\beta'(1) + \delta_\beta'(L_\beta),
\]
with $L_\beta = \langle V_\beta - V_\beta \rangle_\mathbb{Z}$. 
By \cite[Theorem~4]{MS}, \eqref{QM} and the weak finiteness property
\begin{equation}\tag{W} \label{W}
\forall\, x \in \mathbb{Z}[\beta] \cap [0,1)\ \exists\, y \in [0,1-x), k\in\mathbb{N} \colon T_\beta^k(x+y) = T_\beta^k(y) = 0
\end{equation}
imply that $\mathcal{R}_\beta$ is a fundamental domain of $Z'/\delta_\beta'(L_\beta)$. 
Moreover, the unique invariant measure $\mu$ of $(\mathcal{K}_G, \tau_G)$ is given by $\mu=\lambda \circ \varphi_\beta$, where $\lambda$ is the Haar measure on $\mathbb{K}_\beta'$, and $\varphi_\beta$ is injective up to a set of $\mu$-measure zero.

By \cite{Barge}, every Pisot number~$\beta$ satisfies~\eqref{W}. 
Therefore, $\tau_G$ is measurably conjugate to the translation by $\delta_\beta'(1)$ on the compact group $Z'/\delta_\beta'(L_\beta)$ and has thus purely discrete spectrum.
\end{proof}

In \cite{HIT}, the spectrum of cartesian products of
odometers is investigated. 
In particular, linear recurrences $G^{1},\cdots, G^{s}$ of the multi-nacci type 
\[
G^{i}_n = a^{i} G_{n-1} + \cdots + a^{i }G_{d_i}^{i}
\]
of orders $d_{1},\ldots, d_{s}$ with with pairwise coprime positive integers $a^{i}$ $(i=1,\ldots, s)$ are considered. 
It is shown that under a certain assumption on the
independence of the dominating characteristic roots
$\beta_{1},\ldots, \beta_{s}$ of the recurrences $G^{1}, \ldots,
G^{s}$ the cartesian product of the corresponding odometers is
uniquely ergodic. The correct independence condition is
$\beta_{i}\not\in \mathbb{Q}(\beta_{j})$ (for all $i\neq j$),
whereas in \cite{HIT} a wrong condition is stated.
Note that in general such result does not hold.

When the third author gave a seminar talk in Luminy (2013), Pierre
Liardet was in the audience and gave interesting comments on the
structure of the spectrum of cartesian products of odometers. This
was the last time when the third author could meet Pierre. His
death is a great loss for mathematics as well as for his family
and all his friends. For a detailed obituary, see~\cite{memoriam}.

\section*{Acknowledgments}
The first and third author are supported by the Austrian Science Fund (FWF): Project F5510, which is a part
of the Special Research Program "Quasi-Monte Carlo Methods: Theory and Applications".  Furthermore they have received support by the Doctoral School \lq\lq Discrete Mathematics\rq\rq\ at TU Graz.
The second author is supported by the project DynA3S (ANR-13-BS02-0003) of the Agence Nationale de la Recherche. 

\bibliography{hyp_bib}

\begin{thebibliography}{10}

\bibitem{Akiyama}
S.~Akiyama.
\newblock Cubic {P}isot units with finite beta expansions.
\newblock In {\em Algebraic number theory and {D}iophantine analysis ({G}raz,
  1998)}, pages 11--26. de Gruyter, Berlin, 2000.

\bibitem{akiyama2006}
S.~Akiyama.
\newblock Positive finiteness of number systems.
\newblock In {\em Number theory}, pages 1--10. Springer, 2006.

\bibitem{Barat-Grabner16}
G.~Barat and P.~Grabner.
\newblock Combinatorial and probabilistic properties of systems of numeration.
\newblock {\em Ergodic Theory Dynam. Systems}, 36:422--457, 2016.

\bibitem{memoriam}
G.~Barat, P.~J. Grabner, and P.~Hellekalek.
\newblock Pierre {L}iardet (1943--2014) in memoriam.
\newblock {\em EMS Newsletter}, 2015.

\bibitem{Barge}
M.~Barge.
\newblock The {P}isot conjecture for $\beta$-substitutions.
\newblock {\em arXiv preprint arXiv:1505.04408}, 2015.

\bibitem{bertrand}
A.~Bertrand-Mathis.
\newblock D\'eveloppement en base {$\theta$}; r\'epartition modulo un de la
  suite {$(x\theta^n)_{n\geq 0}$}; langages cod\'es et {$\theta$}-shift.
\newblock {\em Bull. Soc. Math. France}, 114(3):271--323, 1986.

\bibitem{frougny}
C.~Frougny and B.~Solomyak.
\newblock Finite beta-expansions.
\newblock {\em Ergodic Theory and Dynamical Systems}, 12:713--723, 1992.

\bibitem{glt}
P.~Grabner, P.~Liardet, and R.~Tichy.
\newblock Odometers and systems of numeration.
\newblock {\em Acta Arithmetica}, 70:103--123, 1995.

\bibitem{Hejda-Steiner}
T.~Hejda and W.~Steiner.
\newblock Beta-expansions of rational numbers in quadratic {P}isot bases.
\newblock {\em arXiv preprint arXiv:1411.2419}, 2014.

\bibitem{HIT}
M.~Hofer, M.~R. Iac\`o, and R.~F. Tichy.
\newblock Ergodic properties of $\beta$-adic {H}alton sequences.
\newblock {\em Ergodic Theory and Dynamical Systems}, 35:895--909, 5 2015.

\bibitem{Ito_Takahashi}
S.~Ito and Y.~Takahashi.
\newblock Markov subshifts and realization of {$\beta $}-expansions.
\newblock {\em J. Math. Soc. Japan}, 26:33--55, 1974.

\bibitem{MS}
M.~Minervino and W.~Steiner.
\newblock Tilings for {P}isot beta numeration.
\newblock {\em Indag. Math. (N.S.)}, 25(4):745--773, 2014.

\bibitem{Parry}
W.~Parry.
\newblock On the {$\beta $}-expansions of real numbers.
\newblock {\em Acta Math. Acad. Sci. Hungar.}, 11:401--416, 1960.

\bibitem{Renyi}
A.~R{\'e}nyi.
\newblock Representations for real numbers and their ergodic properties.
\newblock {\em Acta Math. Acad. Sci. Hungar}, 8:477--493, 1957.

\bibitem{ST}
A.~Siegel and J.~M. Thuswaldner.
\newblock Topological properties of {R}auzy fractals.
\newblock {\em M\'em. Soc. Math. Fr. (N.S.)}, (118):140, 2009.

\end{thebibliography}
\bibliographystyle{abbrv}
\end{document}